\documentclass[11pt]{article}

\usepackage[margin=1in]{geometry}
\usepackage{amsmath,amssymb,amsthm,mathtools}
\usepackage{booktabs}
\usepackage{enumitem}
\usepackage[hidelinks]{hyperref}

\newtheorem{theorem}{Theorem}[section]
\newtheorem{lemma}[theorem]{Lemma}
\newtheorem{corollary}[theorem]{Corollary}

\newcommand{\wt}{\operatorname{wt}}
\newcommand{\ord}{\operatorname{ord}}

\newcommand{\Neg}{\operatorname{Neg}}

\title{Equality Cases in the Tu-Deng Conjecture Bound}
\author{Thomas W. Cusick$^a$ \footnote{email: cusick@buffalo.edu~ORCID 0000-0002-0087-8855}
\vspace{.5cm}\\
$^a$\small Department of Mathematics,
\small University at Buffalo\\
\small 244 Mathematics Bldg.,  Buffalo, NY 14260\\}
\date{}

\begin{document}

\maketitle

\begin{abstract}
The Tu-Deng Conjecture, made in 2011, asserts that for $k\geq2$ and
$1\leq t<2^k-1$, the set
\[
S_{t,k}=\left\{(a,b)\in\{0,1,\ldots,2^k-2\}^2:
 a+b\equiv t\pmod{2^k-1},\quad \wt(a)+\wt(b)<k\right\}
\]
satisfies
\[
                         |S_{t,k}|\leq 2^{k-1}.
\]
Proofs of the conjecture were given independently by the author and by
Liu, Luo and Xie in arXiv in 2026.  Let $r=\wt(t)$, and let $z=k-r$ be the
number of zeros in the $k$-bit expansion of $t$.  We prove that equality
holds in the Tu-Deng Conjecture if and only if every cyclic distance
between consecutive zeros is at least $z$.  We give several other simple
necessary and sufficient conditions, including a forbidden-pattern
condition, a packing condition for the zero positions, a composition
condition for the gaps, and equivalent conditions involving the
coefficients of the polynomial $H_W(u,v)$ in the proof of the Tu-Deng
conjecture.  We also count the equality words having a prescribed number
of zeros.
\end{abstract}
{\bf Keywords}: Boolean function, Tu-Deng Conjecture, binary strings.

\section{History of the Tu-Deng Conjecture}
The Tu-Deng Conjecture stated in the Abstract was first stated in a $2011$
paper \cite{TuDeng} and soon led to much research in an effort to prove it.
The author of the present paper and two of his students at the time soon 
wrote a paper \cite{CLS} proving some special cases of the Tu-Deng Conjecture.
A little later the present author stated a conjecture which is implied by the
Tu-Deng Conjecture (a formal proof of that was given in \cite{ SpW}) 
and this came to be known as the Cusick Conjecture. The Cusick Conjecture
was proved for almost all values of its integer parameter $t$ in \cite{DKS}. A number 
of other papers considered the Cusick Conjecture. In \cite{SpW2} it was shown
that the Cusick Conjecture is true whenever a certain condition on the cyclic
gaps defined in the next section is true. The conjecture was finally proved by
Cheng \cite{Cheng}, who made use of a random walks idea of Tarlowski \cite{Tar}.
The paper of Cheng almost immediately led to two proofs of the Tu-Deng
Conjecture. One was by the present author \cite{Proof} and the other was by $3$ 
Chinese authors Liu, Luo and Xie \cite{LLX}, who thanked ChatGPT for assisting
in the proof.

\section{Notation and the meaning of a cyclic gap}

Fix $k\geq2$, put
\[
                         M=2^k-1,
\]
and let $1\leq t<M$.  As in the Tu-Deng conjecture (see Abstract or \cite[Conjecture 1.1]{Proof}), define
\[
S_{t,k}=\left\{(a,b)\in\{0,1,\ldots,M-1\}^2:
 a+b\equiv t\pmod M,\quad \wt(a)+\wt(b)<k\right\}.
\]
Write the $k$-bit expansion of $t$ as the cyclic word
\[
 W=\varepsilon _0\varepsilon _1\cdots\varepsilon _{k-1},
 \qquad \varepsilon_i\in\{0,1\},
\]
and put
\[
 r=\sum_{i=0}^{k-1}\varepsilon_i=\wt(t),
 \qquad z=k-r.
\]
Since $1\leq t<M$, both $r$ and $z$ are positive.

Let the zero positions, in cyclic order, be
\[
q_1,q_2,\ldots,q_z\in\mathbb Z/k\mathbb Z.
\]
Define the cyclic gaps $d_1,\ldots,d_z$ by
\[
 d_\nu=(q_{\nu+1}-q_\nu)\pmod k,
 \qquad 1\leq d_\nu\leq k,
 \qquad q_{z+1}=q_1.
\]
Thus
\[
                         d_1+\cdots+d_z=k.                 \tag{1}
\]
This convention is important: $d_\nu$ is the distance (or gap) from one zero to
the next zero, so it is one more than the number of intervening ones.
Thus, if there are $a$ ones between two consecutive zeros, the
corresponding gap is $a+1$.  For example, the two gaps
of the cyclic word $0101$ are both $2$.

Set
\[
                         g(W)=\min_{1\leq\nu\leq z}d_\nu.  \tag{2}
\]

\section{The main equivalence theorem}
The simplest necessary and sufficient condition for $|S_{t,k}|=2^{k-1}$  can be stated 
very briefly in terms of the cyclic gaps as follows:

\begin{theorem}[Cyclic gap characterization]
Let $1\leq t<2^k-1$, and let $z=k-\wt(t)$.  In the cyclic $k$-bit
expansion of $t$, let $d_1,\ldots,d_z$ be the distances between
successive zeros, where the distance includes the final zero.
Then
\[
                         |S_{t,k}|=2^{k-1}
 \quad\Longleftrightarrow\quad
                         \min_\nu d_\nu\geq z.               
\]
Equivalently, at least $z-1$ ones occur between every two consecutive
zeros.
\end{theorem}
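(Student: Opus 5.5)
The plan is to reduce $|S_{t,k}|$ to a weighted count of cyclic carry patterns, and then show that the bound $2^{k-1}$ from \cite{Proof} is attained exactly when carry runs shorter than $z$ never interact.

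\textbf{Step 1 (carry-pattern recount).} For $(a,b)$ with $a+b\equiv t\pmod M$, let $c=(c_0,\ldots,c_{k-1})$ be the cyclic (end-around) carry vector. Then $a_i+b_i+c_{i-1}=\varepsilon_i+2c_i$ for all $i$, and
\[
\wt(a)+\wt(b)=r+\wt(c).
\]
So $\wt(a)+\wt(b)<k$ is equivalent to $\wt(c)<z$. For fixed $c$, position $i$ admits $1$ or $2$ choices of $(a_i,b_i)$, according as $s_i=\varepsilon_i+2c_i-c_{i-1}$ lies in $\{0,2\}$ or equals $1$; it admits none if $s_i\notin\{0,1,2\}$. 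The local rules this gives are:
\begin{itemize}[label={}]
\item a run of carries can begin only at a zero of $W$;
\item a run can end only at a one of $W$;
\item inside a run, every position contributes a factor $2$;
\item outside runs, the ones contribute $2$ and the zeros contribute $1$.
\end{itemize}
Thus $|S_{t,k}|$ is a sum, over admissible families of disjoint cyclic carry runs of total length $<z$, of $2^{N(c)}$. (I would check separately that the excluded representation $a=M$ never interferes, since $\wt(c)<z<k$ rules out the all-carry vector.) This is exactly the kind of expansion that $H_W(u,v)$ in \cite{Proof} encodes. I will use the transfer-matrix form of that proof to identify the slack in the inequality $|S_{t,k}|\le 2^{k-1}$ as an explicit nonnegative sum.

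\textbf{Step 2 (sufficiency).} Assume every gap satisfies $d_\nu\ge z$. A run starting at a zero $q_\nu$ has length $<z\le d_\nu$, so it ends before reaching $q_{\nu+1}$. Hence runs attached to different zeros cannot collide or merge, and each run is determined by its starting zero and its length. The weight of a run of length $\ell$ started at a zero is then $2^{\ell}$ times a correction at its two ends: its start replaces a factor $1$ by $2$, and its end must land on a one. The total therefore factorizes as a sum over subsets of zeros and compositions of the total length, depending only on $z$ and $k$. I expect this to evaluate in closed form to $2^{k-1}$, probably via a binomial identity of the form
\[
\sum_{j<z}\binom{k}{j}\cdot(\text{const})=2^{k-1}
\]
after rewriting, or by matching it with the extremal case $W=1^{k-z}0^{z}$ with zeros spread out. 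As a sanity check, I would verify the small case $W=0101$ directly.

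\textbf{Step 3 (necessity), the main obstacle.} Suppose some gap satisfies $d_\nu<z$. I then need strict inequality. The natural approach is to compare with the non-interacting count of Step 2, which equals $2^{k-1}$ formally. Along the short gap, the comparison exhibits a configuration counted there but impossible or down-weighted here: a run starting at $q_\nu$ of length $\ge d_\nu$ but $<z$ now hits the zero $q_{\nu+1}$. At that zero, with $c_{i-1}=1$, the run cannot terminate, since the transition $(1,0)$ is invalid at a zero. Nor can the run starting at $q_{\nu+1}$ be chosen independently. Consequently the terms in the slack of the Tu-Deng inequality that correspond to such collisions are strictly positive.

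The delicate point is to show that the general proof's inequality really is the sum of these collision terms, so that no other slack cancels them, and that at least one term is nonzero. I would first try to locate, in the proof of \cite{Proof}, the step where a coefficient of $H_W(u,v)$ is bounded by that of the extremal word. I would then show that this coefficient bound is strict exactly when two zeros lie at cyclic distance $<z$, by exhibiting one explicit carry pattern whose weight drops. Finally, the reformulation "at least $z-1$ ones between consecutive zeros" is immediate from the convention $d_\nu=(\text{number of ones})+1$.
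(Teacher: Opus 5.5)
Your Step 1 local rules are wrong, and the error matters. From $a_i+b_i=\varepsilon_i+2c_i-c_{i-1}$, a position with $c_{i-1}=c_i=1$ has $a_i+b_i=\varepsilon_i+1$. It therefore admits two choices when $\varepsilon_i=0$, but only one ($a_i=b_i=1$) when $\varepsilon_i=1$. A position where a run starts ($c_{i-1}=0$, $c_i=1$) or stops ($c_{i-1}=1$, $c_i=0$) admits a single choice. So inside a run the roles of $0$ and $1$ are swapped, and the weight is $2^{N(c)}$ with $N(c)=\#\{i:c_{i-1}=c_i\neq\varepsilon_i\}$.

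With this correction your Step 2 goes through, and it gives a sufficiency proof more elementary than the paper's. The paper instead shows $u^{z-1}\mid R_W$ and $\deg_vR_W\le z-1$, so every $\Delta_m^-(W)$ in (34) vanishes, and uses $r\ge z$ to discard the determinant monomial in (40). In your model, if every $d_\nu\ge z$, a run of length $\ell<z$ from $q_\nu$ passes only over ones, stops on a one, and multiplies the weight by exactly $2^{-\ell}$. Runs at different zeros are independent, so
\[
|S_{t,k}|=2^{r}\sum_{w=0}^{z-1}\binom{w+z-1}{z-1}2^{-w}=2^{r}\cdot 2^{z-1}=2^{k-1}
\]
by the classical identity $\sum_{w=0}^{n}\binom{n+w}{n}2^{-w}=2^{n}$. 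As written, though, you only ``expect'' a closed form, and the identity you guess is not the relevant one. Also, $1^{k-z}0^{z}$ is not an equality word for $z\ge2$, so matching against it cannot work.

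The serious gap is Step 3, which is the substance of the theorem and is left unproved. With the correct weights, collisions are not uniformly harmful. If some $d_\nu\le z-2$, a run from $q_\nu$ can pass over $q_{\nu+1}$, and that interior zero contributes a factor $2$. For $W=01010101$ ($z=r=4$), each run of length $3$ has weight $8$ rather than the non-interacting $2$. At carry weight $3$ this exactly offsets the forbidden patterns ($40$ in both counts), and the whole deficit $128-112$ sits at carry weight $2$. So no termwise comparison yields strictness. You need an exact expression of the deficit as a nonnegative sum, plus one provably positive term.

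That is what the paper supplies and your plan lacks: the median identity (34) with all $\Delta_m^-(W)\ge0$, the formula (40), and a case split.
\begin{itemize}
\item If $r<z$, the determinant monomial $2^{-k}x^{2r-k}$ has negative exponent and \emph{adds} negative mass. This is beaten by $\Delta_r^-(W)\ge2$, because $2^{-k}<2^{-2r}$. Your plan does not see this case at all.
\item If $r\ge z$ and some $d_\nu<z$, the top-boundary values $h_{i,z-1}(W)=A_i$ together with identity (29) give $[u^{z-2}v^{z-1}]R_W=(z-1)\Delta_{z-1}^-(W)>0$.
\end{itemize}
Looking for a step where a coefficient of $H_W$ is bounded by that of an extremal word does not replace these ingredients.
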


The next theorem will give a long list of necessary and sufficient conditions.  Some are purely
combinatorial, while the last three use the polynomial notation of the proof in
\cite[Sections 2--5 and 8]{Proof}.

With the matrices from \cite[Sections 2--3]{Proof},
\[
 C_0=\begin{pmatrix}1&0\\ u&v\end{pmatrix},\qquad
 C_1=\begin{pmatrix}u&v\\ 0&1\end{pmatrix},\qquad
 M_W=C_{\varepsilon_0}C_{\varepsilon_1}\cdots C_{\varepsilon_{k-1}},
\]
put
\[
 Q_W(u,v)=\operatorname{tr}M_W-u^rv^z.
\]
Let
\[
 H_W(u,v)=\sum_{i,j\geq0}h_{i,j}(W)u^iv^j
\]
be the polynomial of \cite[equation (10)]{Proof}, defined by
\[
 Q_W(u,v)=1+(u+v-1)H_W(u,v),                                \tag{3}
\]
and define the positive deletion polynomial
\[
 R_W(u,v)=H_W+D_uH_W,
 \qquad
 D_u=(u-1)\frac{\partial}{\partial u}
                  +v\frac{\partial}{\partial v}.            \tag{4}
\]
Also define the diagonal defects
\[
 \Delta_m^-(W)=2h_{m-1,m}(W)-h_{m,m}(W),
 \qquad m\geq1.                                             \tag{5}
\]

\begin{theorem}[Equivalent equality conditions]\label{thm:main}
The following conditions are equivalent.

\begin{enumerate}[label=\textup{(\roman*)},ref=\textup{(\roman*)},leftmargin=3em]
\item \label{it:equality}
      $|S_{t,k}|=2^{k-1}$.

\item \label{it:gaps}
      Every cyclic gap satisfies
      \[
                              d_\nu\geq z.
      \]

\item \label{it:runs}
      Every zero is isolated and every maximal cyclic run of ones has
      length at least $z-1$.

\item \label{it:patterns}
      The cyclic word $W$ contains no cyclic factor
      \[
                              0\,1^j\,0
                              \qquad(0\leq j\leq z-2).
      \]
      When $z=1$, the displayed range of $j$ is empty.

\item \label{it:packing}
      If
      \[
            Z=\{i\in\mathbb Z/k\mathbb Z:\varepsilon_i=0\},
      \]
      then
      \[
             Z\cap(Z+s)=\varnothing
             \qquad(1\leq s\leq z-1).                       \tag{6}
      \]
      Equivalently, the $z$ cyclic intervals
      \[
             q+[0,z-1]\pmod k,\qquad q\in Z,
      \]
      are pairwise disjoint.

\item \label{it:composition}
      There are nonnegative integers $e_1,\ldots,e_z$ such that
      \[
             d_\nu=z+e_\nu\quad(1\leq\nu\leq z),
             \qquad
             e_1+\cdots+e_z=k-z^2.                           \tag{7}
      \]

\item \label{it:divisibility}
      The positive deletion polynomial satisfies
      \[
                              u^{z-1}\mid R_W(u,v).           \tag{8}
      \]

\item \label{it:negative}
      The Laurent polynomial $R_W(x,x^{-1})$ has no negative
      powers of $x$.

\item \label{it:defects}
      Every diagonal defect vanishes:
      \[
            \Delta_m^-(W)=0\quad(m\geq1),
      \]
 or, equivalently,
      \[
            2h_{m-1,m}(W)=h_{m,m}(W)\quad(m\geq1).           \tag{9}
      \]
\end{enumerate}

If $z\geq2$, these conditions are also equivalent to the following
single defect test:
\begin{equation}
 r\geq z
 \quad\hbox{and}\quad
 2h_{z-2,z-1}(W)=h_{z-1,z-1}(W).                             \tag{10}
\end{equation}
For $z=1$, equality always holds.
\end{theorem}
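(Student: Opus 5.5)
The plan is to split the nine conditions into a combinatorial block (ii)--(vi) and an algebraic block (i), (vii)--(ix). For each block I prove the internal equivalences directly, and then I connect the two blocks through (i).

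The combinatorial block is elementary. Since $d_\nu$ is one more than the number of ones between consecutive zeros, $d_\nu\ge z$ says exactly that at least $z-1$ ones separate each pair of consecutive zeros. This is (iii) when $z\ge2$; when $z=1$, (iii) holds vacuously. It is also (iv), because a factor $01^j0$ with $j\le z-2$ is precisely a gap $d=j+1\le z-1$. Condition (v) says that no two zeros lie at cyclic distance $s\in[1,z-1]$. Any two zeros are separated by a sum of consecutive gaps, so the minimum distance between distinct zeros is $g(W)$. Hence (v) is equivalent to $g(W)\ge z$, i.e.\ to (ii). Disjointness of the translated intervals is just a restatement. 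Finally, (vi) follows from (ii) by setting $e_\nu=d_\nu-z$ and using (1), and the converse is immediate. Note in passing that (ii) forces $k\ge z^2$, i.e.\ $r\ge z(z-1)$, which gives $r\ge z$ when $z\ge2$. This will be needed for (10).

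The algebraic block uses the framework of \cite{Proof}. There $|S_{t,k}|$ is expressed through the coefficients of $\operatorname{tr}M_W$, and hence of $H_W$ via (3). The bound $2^{k-1}$ is obtained by writing $2^{k-1}-|S_{t,k}|$ as a nonnegative combination of the diagonal defects $\Delta_m^-(W)$, or equivalently of the coefficients of $R_W$ attached to negative powers in $R_W(x,x^{-1})$. I would reread Sections 4--5 and 8 of \cite{Proof} to extract this identity explicitly, in the form
\[
2^{k-1}-|S_{t,k}|=\sum_{m\ge1}c_m\Delta_m^-(W),\qquad c_m>0,\ \Delta_m^-(W)\ge0 .
\]
This yields (i)$\Leftrightarrow$(ix) at once. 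Next, the substitution $v=x^{-1}$ sends $u^iv^j$ to $x^{i-j}$, so the negative powers of $R_W(x,x^{-1})$ come from the coefficients of $R_W$ with $i<j$. Because $R_W$ has nonnegative coefficients, which is the point of the word ``positive'', there is no cancellation. Relating these coefficients to the $h_{m-1,m}$ and $h_{m,m}$ by applying $D_u$ termwise gives (viii)$\Leftrightarrow$(ix). For (vii) I would argue with support: every monomial $u^iv^j$ in $R_W$ should satisfy $j\le z-1$ plus a structural constraint, so that ``no $i<j$'' becomes ``$i\ge z-1$''.

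The bridge (ii)$\Leftrightarrow$(ix) is the main obstacle. My approach is to interpret $h_{i,j}(W)$ combinatorially through the transfer matrices. Then $\operatorname{tr}M_W$ enumerates cyclic state sequences, which amount to carry patterns in $a+b\equiv t$, weighted by $u^{\#}v^{\#}$. Under this reading, a monomial contributing a positive defect at level $m$ should correspond to a configuration that uses $m$ zeros of $W$ lying in a window too short to hold them. For the direction (ii)$\Rightarrow$(ix), when all gaps are at least $z$ no such configuration exists, so every defect vanishes. For the converse, a short gap $01^j0$ with $j\le z-2$ lets me exhibit explicitly one configuration giving $\Delta^-_{z-1}(W)>0$. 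This same construction proves the single test (10): given $r\ge z$, the only defect that can be nonzero is the one at $m=z-1$. I would first try to obtain the sign and vanishing of $\Delta_m^-$ by induction on $k$, inserting a letter $1$ into a run via $C_1$ and tracking how $h_{m-1,m}$ and $h_{m,m}$ change. Finally, the case $z=1$ is trivial: (ii) holds automatically, consistent with equality always holding.
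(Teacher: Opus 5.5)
Your combinatorial block (ii)--(vi) is correct and is essentially the paper's argument, including the observation that (ii) forces $k\ge z^2$ and hence $r\ge z$. The algebraic block, however, contains steps that fail as stated.

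First, the identity you plan to extract is not the one that is available. The quantity $|S_{t,k}|/2^k$ is the negative mass of $\operatorname{tr}M_W(x/2,x^{-1}/2)=Q_W(x/2,x^{-1}/2)+2^{-k}x^{r-z}$, and the median identity (34) controls only the $Q_W$ part. So
\[
2^{k-1}-|S_{t,k}|=\sum_{m\ge1}2^{k-2m-1}\Delta_m^-(W)-\epsilon,
\qquad
\epsilon=\begin{cases}1,&r<z,\\ 0,&r\ge z.\end{cases}
\]
For example, $W=001$ has $H_W=1+v$, $\Delta_1^-=2$ and $|S_{1,3}|=3$. Thus (i)$\Leftrightarrow$(ix) is immediate only when $r\ge z$. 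For $r<z$ you need an extra input that pushes the weighted defect sum above $1$; the paper takes this from $\Delta_r^-(W)\ge2$ in \cite{Proof}.

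Second, applying $D_u$ termwise identifies only the subdiagonal, $[u^{m-1}v^m]R_W=m\Delta_m^-$. Negative powers of $R_W(x,x^{-1})$ come from every $[u^iv^j]R_W$ with $i<j$. This gives (viii)$\Rightarrow$(ix) but not the converse.

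Third, the claim you use for (10) is false. For $W=000111$ (so $r=z=3$) one computes $H_W=(1+u+u^2)(1+v+v^2)$, hence $\Delta_1^-=\Delta_2^-=1$. What (10) actually needs is only that $\Delta_{z-1}^->0$ whenever $r\ge z$ and the gap condition fails.

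The central gap is the bridge from (ii) to (vii)--(ix), which you leave heuristic (``should correspond to a configuration'', ``plus a structural constraint''). The paper proves three concrete facts about $R_W$.
\begin{enumerate}[label=(\alph*)]
\item $\deg_vR_W\le z-1$. This comes from the top-boundary formula of \cite{Proof} together with bit-complement symmetry.
\item $\ord_uR_W=g(W)-1$. For isolated zeros, write $R_W$ as the nonnegative sum (16) of the $\gamma_{V(i)},\delta_{V(i)}$ and track $u$-orders through the recurrence (18). Each deletion in, or just before, the run $1^{a_\nu}$ contributes order exactly $a_\nu$.
\item The top row is $h_{i,z-1}(W)=A_i$, the coefficients of $\prod_\nu(1+y+\cdots+y^{a_\nu})$. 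Through the identity (29) this gives $[u^{z-2}v^{z-1}]R_W=(z-1)\Delta_{z-1}^->0$ when $r\ge z$ and $g<z$. It also handles adjacent zeros in (b).
\end{enumerate}
From (a) and (b), the gap condition gives $u^{z-1}\mid R_W$. So every monomial $u^iv^j$ of $R_W$ has $i\ge z-1\ge j$, which kills all defects and all negative powers. Equality then follows, since $r\ge z$ makes the determinant term harmless. From (c) and the $r<z$ input, a failed gap condition yields a positive defect and a strict inequality.

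Your combinatorial reading of $h_{i,j}$, or an induction on $k$ inserting $1$'s, would have to reproduce (a)--(c). As written, it does not.
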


Thus the proposed condition is exactly condition
\ref{it:gaps}.  In words, if $z=k-\wt(t)$, equality holds precisely
when at least $z-1$ ones occur between every two consecutive zeros.

\section{Polynomial facts used in the proof}

For a nonzero polynomial $P(u,v)$, let $\ord_uP$ denote the least
exponent of $u$ occurring in $P$, and put $\ord_u0=\infty$.
We record the coefficient and support facts needed below.  The point of
the lemma is that a short cyclic gap produces a strictly positive
coefficient on the first diagonal below the main diagonal.

\begin{lemma}[Gap-support lemma]\label{lem:support}
Let $g=g(W)$.

\begin{enumerate}[label=\textup{(\alph*)}]
\item
The $v$-degree of $R_W$ is at most $z-1$.

\item
If $z\geq2$, then
\[
                         \ord_u R_W=g-1.                     \tag{11}
\]
In particular,
\[
                  u^{z-1}\mid R_W
                  \quad\Longleftrightarrow\quad g\geq z.   \tag{12}
\]

\item
Suppose $z\geq2$, $r\geq z$, and $g<z$.  Then
\[
                    [u^{z-2}v^{z-1}]R_W>0.                  \tag{13}
\]
Equivalently,
\[
                         \Delta_{z-1}^-(W)>0.                \tag{14}
\]
\end{enumerate}
\end{lemma}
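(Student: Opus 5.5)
The plan is to give $R_W$ a manifestly positive combinatorial expansion coming from the transfer-matrix description of $\operatorname{tr}M_W$, and then read (a)--(c) off that expansion. For the trace, $\operatorname{tr}M_W$ is a sum over cyclic state sequences $s_0,s_1,\ldots,s_k=s_0$ with $s_i\in\{1,2\}$. The step $s_i\to s_{i+1}$ at letter $\varepsilon_i$ has weight $(C_{\varepsilon_i})_{s_is_{i+1}}$. At a $0$ the allowed steps are $1\to1$ (weight $1$), $2\to1$ (weight $u$) and $2\to2$ (weight $v$). At a $1$ the allowed steps are $1\to1$ (weight $u$), $1\to2$ (weight $v$) and $2\to2$ (weight $1$). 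The all-$1$ and all-$2$ sequences give $u^r$ and $v^z$.

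The first step is to divide $Q_W-1$ by $u+v-1$ using (3). I would do this by grouping state sequences according to the positions where the state changes. Each maximal block of $2$'s is entered at a $1$ (factor $v$) and left at a $0$ (factor $u$). Comparing a sequence with its neighbours, obtained by extending or shrinking a block by one letter, should produce the factor $(u+v-1)$. This leaves an expression for $H_W$ as a signed sum over ``blocks with a marked boundary''.

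The second step is to apply $D_u=(u-1)\partial_u+v\partial_v$. This is the point of the name ``deletion'': $R_W$ should become a nonnegative sum over configurations in which one letter, or one block endpoint, is deleted or marked. The negative terms in $H_W$ should cancel against the $-\partial_u$ part. I expect establishing this positive formula for $R_W$ to be the main obstacle. I would try first to verify it on small words such as $0101$ and $0^z1^r$, and then prove it by induction on $k$, appending a letter and using $M_{W\varepsilon}=M_WC_\varepsilon$.

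Granting the positive expansion, part (a) follows because every monomial has $v$-exponent equal to the number of zeros spent in state $2$ or the number of $1\to2$ switches. In a surviving configuration at least one zero is forced into state $1$ or is deleted, so the $v$-degree is at most $z-1$. For part (b), a configuration's $u$-exponent counts the ones spent in state $1$ plus the $2\to1$ switches. The cheapest configuration puts state $1$ only on the stretch between the two zeros bounding a shortest gap. That stretch contains $g-1$ ones, which gives $\ord_uR_W\le g-1$. Conversely, every configuration must pay $u$ for each one in some full gap, which gives $\ord_uR_W\geq g-1$. Positivity rules out cancellation, so (11) holds, and (12) follows at once from (11).

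For part (c), I would exhibit an explicit configuration of weight $u^{z-2}v^{z-1}$ when $g<z\le r$. Start from the minimal configuration at a gap of length $g$, which has weight $u^{g-1}v^{j}$ with $j\le z-1$. Then raise the $u$-exponent to $z-2$ by moving $z-1-g$ further ones into state $1$; the hypothesis $r\ge z$ guarantees there are enough ones. At the same time keep the $v$-exponent at the maximum $z-1$. Positivity then gives (13).

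Finally, (14) is a coefficient computation from (4) with $m=z-1\geq1$. The term $H$ contributes $h_{m-1,m}$. The term $u\partial_uH$ contributes $(m-1)h_{m-1,m}$, and $-\partial_uH$ contributes $-m\,h_{m,m}$. The term $v\partial_vH$ contributes $m\,h_{m-1,m}$. Adding these gives
\[
[u^{m-1}v^m]R_W=2m\,h_{m-1,m}-m\,h_{m,m}=m\,\Delta_m^-(W),
\]
so (13) and (14) are equivalent.
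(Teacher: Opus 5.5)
Your proposal has a genuine gap. Everything in (a)--(c) is read off from a positive combinatorial expansion of $R_W$ that you never state or prove; you flag this yourself as the main obstacle. The properties you then use are claims about an object that has not been defined:
\begin{itemize}
\item every ``surviving configuration'' has $v$-exponent at most $z-1$;
\item every configuration pays $u$ for each one of some full gap;
\item one can raise the $u$-exponent to $z-2$ while holding the $v$-exponent at $z-1$.
\end{itemize}
This is not a detail. $H_W$ is a quotient by $u+v-1$, and $R_W=H_W+(u-1)\partial_uH_W+v\partial_vH_W$ contains a subtraction, so nonnegativity of the coefficients of $R_W$ is exactly the nontrivial input. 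Even granting positivity, your step in (c) of adjusting the two exponents independently is unjustified. In your own trace model they are coupled: the $v$-exponent equals the number of zeros read in state $2$, so extending a block of state $1$ past a zero lowers it.

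The paper closes this gap with two results imported from \cite{Proof}.
\begin{itemize}
\item For (b) it uses the deletion decomposition (16), $R_W=\sum_{\varepsilon_i=1}\gamma_{V(i)}+\sum_{\varepsilon_i=0}\delta_{V(i)}$, whose summands have nonnegative coefficients. It tropicalizes the recurrence (17) to (18), and a short induction gives (20): each summand has $u$-order exactly the length $a_\nu$ of its associated run of ones. This is the rigorous form of your ``pays for a full gap'' heuristic.
\item For (c) it uses the exact top row (24), $h_{i,z-1}(W)=A_i$, where $P(y)=\prod_\nu F_{a_\nu}(y)=\sum_i A_iy^i$. Then $[u^iv^{z-1}]R_W$ is the coefficient of $y^i$ in $zP-(1-y)P'$.
\item Identity (29) shows this coefficient is positive whenever two zeros are adjacent ($p<z$). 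That case also supplies the $u^0$ term needed for (11) when $g=1$.
\item For isolated zeros, (29) gives positivity at $i=z-2$ because some $a_\nu=g-1\le z-2$; the hypothesis $r\ge z$ guarantees $i<r$.
\end{itemize}
So (c) is an exact coefficient computation, not the exhibition of a single configuration.

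Two parts of your proposal do stand.
\begin{itemize}
\item Part (a) needs no expansion at all. In your trace model, the $1\to2$ switches occur only at ones and the $2\to1$ switches only at zeros, and cyclically they are equinumerous. Hence the $v$-exponent of every state sequence equals the number of zeros read in state $2$, so $\deg_v(Q_W-1)\le z$. Since $u+v-1$ has leading coefficient $1$ in $v$, (3) forces $\deg_vH_W\le z-1$, and $D_u$ does not raise the $v$-degree.
\item Your computation $[u^{m-1}v^m]R_W=m\,\Delta_m^-(W)$ is correct. It is the paper's (31) and gives (13)$\Leftrightarrow$(14).
\end{itemize}
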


\begin{proof}
The bit-complement symmetry proved in \cite[proof of Theorem 5.2]{Proof}
interchanges $u$ and $v$.  Applying this symmetry to the top-boundary
formula in \cite[Theorem 8.3, equation (43)]{Proof} gives
\[
                         \deg_v H_W\leq z-1.
\]
None of the operations in (4) increases the $v$-degree.  Hence
\[
                         \deg_v R_W\leq z-1,                 \tag{15}
\]
which proves part (a).

We next explain the $u$-order.  The positive differential decomposition in
\cite[Theorem 5.1, equation (27)]{Proof} is
\[
 R_W=\sum_{\varepsilon_i=1}\gamma_{V(i)}
          +\sum_{\varepsilon_i=0}\delta_{V(i)},              \tag{16}
\]
where $V(i)$, $\gamma_V$, and $\delta_V$ are those of
\cite[Sections 2 and 5]{Proof}.  All the summands have nonnegative
coefficients.  The relevant part
of the six-state recurrence from \cite[Theorem 2.2, equations (6)--(7)]{Proof}
is
\[
\begin{array}{c|ccc}
 &\gamma'&\delta'&d'\\ \hline
 \text{append }1&u\gamma&\delta+v\gamma&ud\\
 \text{append }0&\gamma+d+u\delta&v\delta&vd.
\end{array}                                                   \tag{17}
\]
Put
\[
 A=\ord_u\gamma,\qquad B=\ord_u\delta,
 \qquad C=\ord_ud,
\]
with $\ord_u0=\infty$.  Taking $u$-orders in (17) gives
\[
\begin{array}{c|ccc}
 &A'&B'&C'\\ \hline
 \text{append }1&A+1&\min\{B,A\}&C+1\\
 \text{append }0&\min\{A,C,B+1\}&B&C.
\end{array}                                                   \tag{18}
\]

First suppose that the zeros of $W$ are isolated.  After a cyclic
rotation write
\[
              W=0\,1^{a_1}0\,1^{a_2}\cdots0\,1^{a_z},
              \qquad a_\nu\geq1.                             \tag{19}
\]
Applying (18) successively to the blocks in (19) gives the following
two elementary facts:
\[
\begin{aligned}
 \ord_u\delta_{V(i)}&=a_\nu,
   &&\text{if the deleted position $i$ is the zero preceding
             the run $1^{a_\nu}$},\\
 \ord_u\gamma_{V(i)}&=a_\nu,
   &&\text{if the deleted position $i$ lies in the run
             $1^{a_\nu}$}.
\end{aligned}                                                 \tag{20}
\]
Here is the short induction behind (20).  Begin with
$(A,B,C)=(\infty,\infty,0)$.  If the deleted position is the zero
preceding $1^{a_\nu}$, then $V(i)$ begins with $a_\nu$ ones.  After
these ones and the next zero, (18) gives $A=a_\nu$ and $B=\infty$;
at the first following one, $B$ becomes $a_\nu$.  Thereafter, before
every later zero one has $A,C\geq a_\nu+1$, so that the zero resets
$A$ to $a_\nu+1$ while leaving $B$ unchanged.  Thus a later one can
never lower $B$, and the final value is $B=a_\nu$.

If instead the deleted position is a one in $1^{a_\nu}$, write
$p$ for the number of ones of that run occurring at the beginning of
$V(i)$ and $q$ for the number occurring at the end.  Then
$p+q=a_\nu-1$.  After the initial $p$ ones and the next zero, (18)
gives $A=p$, $B=\infty$, and $C=p$.  At the first subsequent one,
$B$ becomes $p$ and remains $p$.  At each later zero, after the first,
the value of $A$ is reset to $p+1$: immediately before such a zero one
has $A\ge p+1$ and $C\ge p+1$, while $B+1=p+1$.  In particular, at
the zero preceding the terminal block of $q$ ones we have $A=p+1$.
The final $q$ ones increase $A$ to
$p+1+q=a_\nu$.  This proves both assertions in (20).

Since (16) is a sum of polynomials with nonnegative coefficients, (20)
implies
\[
                    \ord_uR_W=\min_\nu a_\nu=g-1.             \tag{21}
\]
If two zeros are adjacent, then $g=1$.  The calculation below shows
that $[u^0v^{z-1}]R_W>0$, and therefore $\ord_uR_W=0=g-1$ in
that case also.  This proves part (b).

It remains to prove the asserted strict coefficient.  Write the general
cyclic run decomposition as
\[
              W=1^{a_1}0^{b_1}\cdots1^{a_p}0^{b_p},
              \qquad a_\nu,b_\nu\geq1,                       \tag{22}
\]
where
\[
              a_1+\cdots+a_p=r,\qquad
              b_1+\cdots+b_p=z.
\]
Define
\[
 P(y)=\prod_{\nu=1}^{p}(1+y+\cdots+y^{a_\nu})
      =\sum_{i=0}^{r}A_i y^i.                                \tag{23}
\]
Using again the bit-complement symmetry from \cite[proof of Theorem 5.2]{Proof}
and then the top-boundary formula \cite[Theorem 8.3, equation (43)]{Proof}
gives
\[
 h_{i,z-1}(W)=A_i\quad(0\leq i<r),
 \qquad h_{r,z-1}(W)=0.                                     \tag{24}
\]
From (4),
\[
 [u^iv^j]R_W=(i+j+1)h_{i,j}-(i+1)h_{i+1,j}.                 \tag{25}
\]
Consequently, for $0\leq i<r$,
\[
 [u^iv^{z-1}]R_W=(i+z)A_i-(i+1)A_{i+1}.                    \tag{26}
\]

Put $F_a(y)=1+y+\cdots+y^a$.  The elementary identity
\[
                 F_a-(1-y)F_a'=(a+1)y^a                    \tag{27}
\]
implies
\[
 pP-(1-y)P'
 =\sum_{\nu=1}^{p}(a_\nu+1)y^{a_\nu}
       \prod_{\substack{\mu=1\\\mu\ne\nu}}^{p}F_{a_\mu}(y).
                                                                    \tag{28}
\]
Therefore
\[
\begin{split}
 zP-(1-y)P'
  ={}&(z-p)P\\
   &+\sum_{\nu=1}^{p}(a_\nu+1)y^{a_\nu}
       \prod_{\substack{\mu=1\\\mu\ne\nu}}^{p}F_{a_\mu}(y).
\end{split}                                                       \tag{29}
\]
The coefficient of $y^i$ on the left side of (29) is exactly the
right side of (26).

If some two zeros are adjacent, then $p<z$.  Since every coefficient of
$P$ from degree $0$ through degree $r$ is positive, the term $(z-p)P$
shows that
\[
                    [u^iv^{z-1}]R_W>0
                    \qquad(0\leq i<r).                       \tag{30}
\]
In particular, taking $i=0$ proves the adjacent-zero assertion used in
the proof of part (b).

Suppose instead that all zeros are isolated.  Then $p=z$, and the
one-run lengths in (22) are precisely the integers $d_\nu-1$.  If
$g<z$, some $a_\nu=g-1\leq z-2$.  Take $i=z-2$ in (29).  Since
$r\geq z$, we have $i<r$.  Moreover
$0\leq i-a_\nu\leq r-a_\nu$, and a product of polynomials
$1+y+\cdots+y^{a_\mu}$ has a positive coefficient in every degree
from $0$ through the sum of its degrees.  Hence the coefficient of
\[
 y^{i-a_\nu}
 \quad\text{in}\quad
 \prod_{\mu\ne\nu}F_{a_\mu}(y)
\]
is positive.  Thus (26) again gives
\[
                    [u^{z-2}v^{z-1}]R_W>0.
\]
Finally, putting $m=z-1$ in (25) gives
\[
 [u^{z-2}v^{z-1}]R_W
    =(z-1)\bigl(2h_{z-2,z-1}-h_{z-1,z-1}\bigr)
    =(z-1)\Delta_{z-1}^-,                                   \tag{31}
\]
which proves parts (c) and (14).
\end{proof}

\section{Proof of the equivalence theorem}

The equivalence among the elementary combinatorial conditions is
immediate, but we give the details.  Since $d_\nu-1$ is the number of
ones between consecutive zeros,
\[
 d_\nu\geq z
 \quad\Longleftrightarrow\quad
 d_\nu-1\geq z-1.                                          \tag{32}
\]
This proves the equivalence of conditions
\ref{it:gaps}, \ref{it:runs}, and \ref{it:patterns}.
Condition \ref{it:packing} says exactly that no two zero positions are
at cyclic distance $1,2,\ldots,z-1$, so it is equivalent to
condition \ref{it:gaps}.  Finally, using (1), condition
\ref{it:gaps} is equivalent to writing
\[
 d_\nu=z+e_\nu,\qquad e_\nu\geq0,
\]
where
\[
 \sum_{\nu=1}^{z}e_\nu
     =\sum_{\nu=1}^{z}d_\nu-z^2=k-z^2.
\]
Thus conditions \ref{it:gaps}--\ref{it:composition} are equivalent.

By Lemma \ref{lem:support}(b), condition \ref{it:gaps} is equivalent
to condition \ref{it:divisibility} when $z\geq2$.  When $z=1$, both
conditions are automatic, so the equivalence holds in that case as
well.

We next connect these statements to equality in the Tu-Deng bound.
The coefficient calculation in \cite[proof of Theorem 5.2]{Proof} gives
\[
 [u^{m-1}v^m]R_W=m\Delta_m^-(W).                            \tag{33}
\]
As in \cite[equation (39)]{Proof}, let $\sigma^+(W)$ and
$\sigma^-(W)$ denote the total coefficients of the nonnegative and
negative powers, respectively, of $Q_W(x/2,x^{-1}/2)$.  The exact
median identity in \cite[Lemma 8.2, equation (41)]{Proof} is
\[
 \sigma^+(W)=\frac12+
       \sum_{m\geq1}2^{-2m-1}\Delta_m^-(W),                 \tag{34}
\]
and every $\Delta_m^-(W)$ is nonnegative.  Hence
\[
 \sigma^+(W)=\frac12
 \quad\Longleftrightarrow\quad
 \Delta_m^-(W)=0\text{ for every }m\geq1.                  \tag{35}
\]

Suppose first that the gap condition holds.  From (1),
\[
                         k=\sum_{\nu=1}^{z}d_\nu\geq z^2.   \tag{36}
\]
If $z\geq2$, this gives
\[
                         r=k-z\geq z(z-1)\geq z.             \tag{37}
\]
The case $z=1$ also has $r\geq z$.  Lemma \ref{lem:support}
gives
\[
                  u^{z-1}\mid R_W,
                  \qquad \deg_vR_W\leq z-1.                \tag{38}
\]
Consequently $R_W$ has no monomial $u^{m-1}v^m$: if
$m\leq z-1$, its $u$-degree is too small, while if $m\geq z$,
its $v$-degree is too large.  Equations (33)--(35) therefore give
\[
                         \sigma^+(W)=\sigma^-(W)=\frac12.   \tag{39}
\]

Theorem 6.1, the final assertion of Theorem 6.2 (following equation (35)),
and equation (38) of \cite{Proof} combine to give
\[
 \frac{|S_{t,k}|}{2^k}
 =\Neg\left(
       Q_W(x/2,x^{-1}/2)+2^{-k}x^{2r-k}
           \right),                                        \tag{40}
\]
where $\Neg$ denotes the total coefficient of the negative powers of
$x$.  Since
\[
                         2r-k=r-z\geq0,                     \tag{41}
\]
the last monomial in (40) contributes no negative mass.  Equations
(39) and (40) give
\[
                         |S_{t,k}|=2^{k-1}.                  \tag{42}
\]

Conversely, suppose that the gap condition fails.  If $r<z$, then
$2r<k$.  Equation (44), following Theorem 8.3 in \cite{Proof}, gives
\[
                         \Delta_r^-(W)\geq2.                 \tag{43}
\]
It follows from (34) that the negative mass of
$Q_W(x/2,x^{-1}/2)$ is at most
\[
                         \frac12-2^{-2r}.
\]
The determinant monomial in (40) now has negative exponent and adds
$2^{-k}$.  Since $k>2r$,
\[
                         2^{-k}<2^{-2r},
\]
so the total negative mass in (40) is strictly smaller than $1/2$.
Therefore
\[
                         |S_{t,k}|<2^{k-1}.                  \tag{44}
\]

It remains to consider a failed gap condition with $r\geq z$.  By
Lemma \ref{lem:support}(c),
\[
                         \Delta_{z-1}^-(W)>0.
\]
Thus (34) is strict, so $\sigma^+(W)>1/2$ and
$\sigma^-(W)<1/2$.  The determinant monomial has exponent
$r-z\geq0$ and adds no negative mass.  Equation (40) again gives the
strict inequality (44).  This proves
\[
 |S_{t,k}|=2^{k-1}
 \quad\Longleftrightarrow\quad
 d_\nu\geq z\text{ for every }\nu.                          \tag{45}
\]

The same argument proves the remaining polynomial equivalences.  If
the gap condition holds, (38) shows that $R_W(x,x^{-1})$ has no
negative power.  If the gap condition fails and $r<z$, (43) and (33)
produce the term $u^{r-1}v^r$; if $r\geq z$, Lemma
\ref{lem:support}(c) produces the term $u^{z-2}v^{z-1}$.
Thus a failed gap condition always produces a negative power of $x$.
This proves the equivalence with condition \ref{it:negative}.
Equation (33), together with the already established equivalence between
the gap condition and equality and with (34)--(35), proves the equivalence
with condition \ref{it:defects}.

Finally, suppose $z\geq2$.  If equality holds, we have $r\geq z$ and
all defects vanish, so (10) holds.  Conversely, suppose (10) holds.
If the gap condition failed, Lemma \ref{lem:support}(c) would give
$\Delta_{z-1}^->0$, contrary to (10).  Thus (10) is equivalent to all
of the other conditions.  The proof of Theorem \ref{thm:main} is
complete.

\section{Consequences and examples}

\begin{corollary}[A simple numerical obstruction]\label{cor:numerical}
If $|S_{t,k}|=2^{k-1}$ and the $k$-bit word for $t$ has $z$ zeros,
then
\[
                              z^2\leq k.                     \tag{46}
\]
Equivalently, an equality word can have at most $\lfloor\sqrt{k}\rfloor$
zeros.
\end{corollary}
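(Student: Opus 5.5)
The plan is to derive the bound directly from Theorem \ref{thm:main}, through the equivalence \ref{it:equality}$\Leftrightarrow$\ref{it:gaps}, together with the gap-sum identity (1). Assume $|S_{t,k}|=2^{k-1}$, and let the $k$-bit word $W$ of $t$ have $z$ zeros. Note that $z\geq1$, because $1\leq t<2^k-1$. By Theorem \ref{thm:main}, equality forces every cyclic gap to satisfy $d_\nu\geq z$ for $1\leq\nu\leq z$. Summing these $z$ inequalities and using (1),
\[
 k=d_1+\cdots+d_z\geq z\cdot z=z^2 ,
\]
which is (46). This is the same computation as (36) in the proof of Theorem \ref{thm:main}. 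Alternatively, it follows at once from condition \ref{it:composition}, since $e_1+\cdots+e_z=k-z^2$ is a sum of nonnegative integers.

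For the equivalent formulation, I would observe that $z$ is a positive integer with $z^2\leq k$. This holds if and only if $z\leq\sqrt{k}$, that is, $z\leq\lfloor\sqrt{k}\rfloor$. So an equality word has at most $\lfloor\sqrt{k}\rfloor$ zeros.

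The only point needing care is the degenerate case $z=1$. There the gap condition reads $d_1=k\geq1$, so it holds automatically, and (46) becomes $1\leq k$, which is trivially true since $k\geq2$. No genuine obstacle is expected, because all the substantive work is contained in Theorem \ref{thm:main}.
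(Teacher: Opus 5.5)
Your proof is correct and is the paper's own argument. The paper's proof just cites (36), the computation $k=d_1+\cdots+d_z\geq z^2$, using the gap condition that Theorem~\ref{thm:main} shows is forced by equality.
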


\begin{proof}
This is (36).
\end{proof}

If $k=z^2$, all the nonnegative integers $e_\nu$ in (7) must be zero.
Thus
\[
                          d_1=\cdots=d_z=z,                  \tag{47}
\]
and the equality words are precisely the rotations of
\[
                          (0\,1^{z-1})^z.                    \tag{48}
\]

There is also a simple enumeration formula.

\begin{corollary}[Number of equality words with $z$ zeros]
Let $N(k,z)$ be the number of $k$-bit words on the labeled cyclic
positions $\mathbb Z/k\mathbb Z$ with exactly $z$ zeros for which $|S_{t,k}|=2^{k-1}$.  Then
\[
N(k,z)=
\begin{cases}
\displaystyle
 \frac{k}{k-z(z-1)}
 \binom{k-z(z-1)}{z},&k\geq z^2,\\[2ex]
0,&k<z^2.
\end{cases}                                                   \tag{49}
\]
\end{corollary}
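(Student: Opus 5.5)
By Theorem \ref{thm:main}, condition \ref{it:packing}, $N(k,z)$ counts the $z$-subsets $Z\subseteq\mathbb Z/k\mathbb Z$ whose cyclic intervals $q+[0,z-1]$, $q\in Z$, are pairwise disjoint. Equivalently, it counts the subsets whose cyclic gaps all satisfy $d_\nu\geq z$. The case $k<z^2$ is immediate from Corollary \ref{cor:numerical}, so I will assume $k\geq z^2$. The plan is a standard cyclic "gap-compression" count. I will first count pointed configurations, that is, pairs $(Z,q)$ with $q\in Z$ a marked zero, and then divide by $z$.

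Fix the marked zero $q$. The configuration is then determined by the ordered gap sequence $(d_1,\ldots,d_z)$, read cyclically starting at $q$. Condition \ref{it:composition} says these are exactly $d_\nu=z+e_\nu$ with $e_\nu\geq0$ and $\sum e_\nu=k-z^2$. The number of such compositions is
\[
\binom{k-z^2+z-1}{z-1}.
\]
Letting $q$ range over all $k$ positions, the number of pointed configurations is
\[
k\binom{k-z^2+z-1}{z-1}.
\]
This is a bijection: the marked zero and the gap sequence recover $Z$. Each $Z$ arises from exactly $z$ choices of marked point, so
\[
N(k,z)=\frac{k}{z}\binom{k-z^2+z-1}{z-1}.
\]

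It remains to match this with (49). Put $n=k-z(z-1)$, so that $n-z=k-z^2$ and $n-1=k-z^2+z-1$. The identity
\[
\frac{1}{z}\binom{n-1}{z-1}=\frac{1}{n}\binom{n}{z}
\]
gives
\[
N(k,z)=\frac{k}{n}\binom{n}{z},
\]
which is (49).

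The only point needing care is the division by $z$, and it causes no difficulty because it is performed on pointed objects, not on rotation classes. Each subset $Z$ has exactly $|Z|=z$ marked versions, regardless of any rotational symmetry of $Z$. So no orbit-counting subtlety arises, in contrast to counting words up to rotation.

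I also need to check the degenerate cases. For $z=1$ the formula gives $k/k\cdot\binom{k}{1}=k$, which agrees with "equality always holds." The cyclic convention $1\leq d_\nu\leq k$ handles a single zero, whose gap is $k$.
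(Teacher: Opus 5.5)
Your proof is correct and follows the same route as the paper. You distinguish one zero, encode the word by a weak composition of $k-z^2$ into $z$ parts via condition \ref{it:composition}, multiply by $k$ positions, divide by $z$, and finish with $\frac1z\binom{n-1}{z-1}=\frac1n\binom nz$ for $n=k-z(z-1)$. You also make explicit two points the paper leaves implicit: the case $k<z^2$ follows from Corollary \ref{cor:numerical}, and the division by $z$ is applied to pointed objects, so it is insensitive to rotational symmetry.
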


\begin{proof}
Assume $k\geq z^2$ and put $L=k-z^2$.  By condition
\ref{it:composition}, after distinguishing one zero, an equality word
is specified by a weak composition
\[
                         L=e_1+\cdots+e_z.
\]
There are
\[
                         \binom{L+z-1}{z-1}
\]
such compositions and $k$ choices for the position of the distinguished
zero.  Every word is counted once for each of its $z$ zeros.  Hence
\[
 N(k,z)=\frac{k}{z}\binom{k-z^2+z-1}{z-1}.
\]
Using
\[
 \frac1z\binom{n-1}{z-1}=\frac1n\binom nz,
 \qquad n=k-z(z-1),
\]
gives (49).
\end{proof}

\begin{center}
\begin{tabular}{cccc}
\toprule
$k$ & cyclic word $W$ & cyclic gaps & conclusion\\
\midrule
$4$ & $0101$      & $(2,2)$     & $|S_{t,4}|=8$\\
$5$ & $01011$     & $(2,3)$     & $|S_{t,5}|=16$\\
$5$ & $00111$     & $(1,4)$     & $|S_{t,5}|=12<16$\\
$9$ & $011011011$ & $(3,3,3)$   & $|S_{t,9}|=256$\\
\bottomrule
\end{tabular}
\end{center}

In the first example, the number of intervening ones in each gap is
$1$, but the cyclic gap is $2$.  This illustrates why the
precise convention for the word ``gap'' matters.

\end{document}